\documentclass[a4paper,12pt]{article}
\usepackage[T1]{fontenc}
\usepackage[utf8]{inputenc}
\usepackage{graphicx}
\usepackage{fancyhdr}
\usepackage{float}
\usepackage{xcolor}
\usepackage{authblk}
\usepackage{mathrsfs}
\usepackage{empheq}
\usepackage[hyphens]{url}
\usepackage{hyperref} 
\usepackage[]{breakurl}
\usepackage[]{amsthm}
\usepackage{tikz}
\usepackage{bm}
\usepackage{amsmath}
\usepackage{amssymb}
\usepackage{url}
\usepackage{cite}

\newtheorem{theorem}{Theorem}[section]
\newtheorem{corollary}{Corollary}[theorem]
\newtheorem{lemma}[theorem]{Lemma}

\pagestyle{fancy}
\rhead{
       \thepage}
\lhead{}       
\cfoot{}

\usepackage{geometry}
 \geometry{
 a4paper,
 total={17cm,23cm},
 left=2cm,
 top=3cm,
 }

\begin{document}

\title{Integral representations of the Riemann zeta function of odd argument}

\author[$\dagger$]{Jean-Christophe {\sc Pain}$^{1,2,}$\footnote{jean-christophe.pain@cea.fr}\\
\small
$^1$CEA, DAM, DIF, F-91297 Arpajon, France\\
$^2$Universit\'e Paris-Saclay, CEA, Laboratoire Mati\`ere en Conditions Extr\^emes,\\ 
F-91680 Bruy\`eres-le-Ch\^atel, France
}

\maketitle

\begin{abstract}
In this article we obtain, using an expression of the digamma function $\psi(x)$ due to Mikolas, integral representations of the zeta function of odd arguments $\zeta(2p+1)$ for any positive value of $p$. The integrand consists of the product of a polynomial by one or two elementary trigonometric functions. Examples for the first values of the argument are given. Some of them were already derived by other methods.
\end{abstract}

\section{Introduction}

The Riemann zeta function \cite{Edwards1974,Cartier1992} at odd integers is still an active field of investigation \cite{Ghusayni2009,Rivoal2020} and rather little is known about it. Let us mention a short (non-exhaustive) list of important results. In 1979, Ap\'ery proved that $\zeta(3)$ is irrational \cite{Apery1979,Poorten1979} and in 2000 Rivoal found that infinitely many of the numbers $\zeta(2n + 1)$, $n\geq 1$, are irrational \cite{Rivoal2000} and even linearly independent over $\mathbb{Q}$. Zudilin proved in Ref. \cite{Zudilin2001} the existence of at least one irrational number amongst $\zeta(5)$, $\zeta(7)$, $\zeta(9)$, $\zeta(11)$, and in 2002 Rivoal showed that at least one of the nine numbers $\zeta(5)$, $\zeta(7), \cdots, \zeta(21)$ was irrational \cite{Rivoal2002}. More recently, Zudilin proved by elementary means that one of the odd zeta values from $\zeta(5)$ to $\zeta(25)$ is irrational \cite{Zudilin2018}.  As a last example, we can mention that Almkvist and Granville found  Borwein and Bradley's Ap\'ery-like formulae for $\zeta(4n + 3)$ \cite{Almkvist1999}.

A number of integral representations of $\zeta(2p+1)$ were obtained (see for instance \cite{Espinosa2002a,Espinosa2002b,Mikolas1957,Kumar2024}). In the present work, we provide two general formula (the second being deduced from the first one), involving polynomials whose coefficients depend on absolute values of Bernoulli numbers of even order.

\begin{theorem}

We have, for $p\in\mathbb{N}^*$:
\begin{equation*}
    \zeta(2p+1)=\frac{1}{2}+\frac{\pi}{2}\int_0^1\tan\left(\frac{\pi t}{2}\right)\cos(\pi t)\mathscr{P}_{2p}(t)\,\mathrm{d}t,
\end{equation*}
with
\begin{align}\label{p2p}
    \mathscr{P}_{2p}(t)=&\sum_{n=1, 2, ...}^{2p-5}\left(\frac{t^n\pi^n}{n!}(-1)^{\lfloor\frac{n+1}{2}\rfloor}\frac{2\left(2^{2\left(\lfloor\frac{2p-n-1}{2}\rfloor+1\right)-1}-1\right)\Big|B_{2\left(\lfloor\frac{2p-n-1}{2}\rfloor+1\right)}\Big|}{\left(2\left(\lfloor\frac{2p-n-1}{2}\rfloor+1\right)\right)!}\,\pi^{2\left(\lfloor\frac{2p-n-1}{2}\rfloor+1\right)-1}\right)\nonumber\\
    &+\alpha_{2p}(t)+\frac{\pi^2}{6}\alpha_{2p-2}(t)+\frac{7\pi^4}{360}\alpha_{2p-4}(t),
\end{align}
where
\begin{equation}\label{alpha2p}
    \alpha_{2p}(t)=\frac{\pi^{2p}t^{2p+1}}{(2p+1)!}(-1)^{p+1}.
\end{equation}

\end{theorem}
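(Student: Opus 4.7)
My plan is to start from Mikolas's integral representation of the digamma function $\psi(x)$, whose kernel involves $\tan(\pi t/2)$, and combine it with the classical Taylor expansion at $x=0$,
\begin{equation*}
\psi(x+1) = -\gamma + \sum_{n=1}^{\infty}(-1)^{n+1}\zeta(n+1)\,x^n,
\end{equation*}
which is valid for $|x|<1$. Since the coefficient of $x^{2p}$ on the right-hand side equals $-\zeta(2p+1)$, extracting this coefficient from the Mikolas side---by differentiating $2p$ times under the integral sign and evaluating at $x=0$---converts the identity into an explicit integral formula for $\zeta(2p+1)$, with the additive constant $\tfrac12$ arising as the boundary contribution at $x=0$ of Mikolas's formula.

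Each derivative in $x$ of the trigonometric part of the kernel introduces a factor of $\pi t$ and an alternating sign, producing the weights $(t^n\pi^n/n!)(-1)^{\lfloor(n+1)/2\rfloor}$ visible in $\mathscr{P}_{2p}$. Using the half-angle identity $\cos(\pi t)\tan(\pi t/2) = \sin(\pi t) - \tan(\pi t/2)$ (equivalently $\cos(\pi t) = 1-2\sin^2(\pi t/2)$) would then let me factor $\cos(\pi t)$ out of the integrand, so that the remaining polynomial is precisely $\mathscr{P}_{2p}(t)$.

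The Bernoulli-number coefficients arise by substituting the Euler closed form
\begin{equation*}
\zeta(2k) = \frac{(2\pi)^{2k}|B_{2k}|}{2\,(2k)!}
\end{equation*}
for each even zeta value brought down by Mikolas's kernel. The prefactor $(2^{2k}-1)$ is natural because $\tan(\pi t/2)$ has a partial-fraction decomposition indexed by odd integers, so that the relevant constant is $\lambda(2k) = (1-2^{-2k})\zeta(2k) = (2^{2k}-1)\zeta(2k)/2^{2k}$; in particular the Taylor coefficient of $t^{2k-1}$ in $\tan(\pi t/2)$ equals $4\lambda(2k)/\pi$, which matches $\bigl(2(2^{2k}-1)|B_{2k}|/(2k)!\bigr)\pi^{2k-1}$. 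The three lowest-index contributions are kept unevaluated to produce the $\alpha_{2p}(t)$, $(\pi^2/6)\,\alpha_{2p-2}(t)$ and $(7\pi^4/360)\,\alpha_{2p-4}(t)$ terms (note $\pi^2/6=\zeta(2)$ and $7\pi^4/360=2\eta(4)$).

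The main obstacle is the pole of $\tan(\pi t/2)$ at $t=1$: since $\cos(\pi t)\tan(\pi t/2)\sim -2/(\pi(1-t))$ there, the integral only converges provided $\mathscr{P}_{2p}(1)=0$, which requires that the Bernoulli-number sum exactly cancels the $\alpha_{2p-2j}(1)$ contributions. Verifying this cancellation---together with strict bookkeeping of the alternating signs and of the floor-function indices that combine the even-$n$ and odd-$n$ derivatives of the kernel---is the principal analytic check in the argument.
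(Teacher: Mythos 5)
Your overall strategy --- Mikolas's integral for $\psi$ combined with the Taylor expansion of the digamma function about $1$, followed by extraction of the coefficient of the $2p$-th power of the expansion variable --- is exactly the route the paper takes (the paper writes $\psi(1-z)$ and uses $-\psi(1-z)-\gamma=\sum_{k\ge 2}\zeta(k)z^{k-1}$, but that is the same computation up to a sign convention). The constant $\tfrac12$ does come from the $\tfrac{1}{2z}$ term of Mikolas's formula after the shift, and your observation that convergence at $t=1$ forces $\mathscr{P}_{2p}(1)=0$ is a correct and worthwhile check that the paper leaves implicit.

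However, your account of where the Bernoulli-number coefficients come from is wrong, and if executed as written it would produce the wrong constants. You attribute them to the Taylor expansion of $\tan(\pi t/2)$ in the integration variable $t$; but that kernel is never expanded --- it survives intact in the final integrand. The Bernoulli numbers enter through the expansion of the denominator $\sin(\pi z)$ of Mikolas's kernel in the auxiliary variable, i.e.\ the cosecant series $\csc x=\tfrac1x+\sum_{k\ge1}\tfrac{2(2^{2k-1}-1)|B_{2k}|}{(2k)!}\,x^{2k-1}$, whose low-order terms $\tfrac{1}{\pi z}+\tfrac{\pi z}{6}+\tfrac{7\pi^3z^3}{360}$ are precisely the sources of the three $\alpha$ terms (these coefficients are $2\eta(2k)/\pi^{2k}$, as you yourself note, which already points to the cosecant rather than the tangent). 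The tangent expansion carries the factor $2^{2k}-1$ (the $\lambda$ function), whereas the theorem has $2^{2k-1}-1$ (the $\eta$ function); you silently misquote the theorem's coefficient as $2(2^{2k}-1)$ to make the identification match. Relatedly, the factor $\cos(\pi t)$ is not produced by the half-angle identity you invoke: it appears because every surviving coefficient of the expansion of $\sin(\pi(1-z)t)$ carries the factor $\sin(\tfrac{\pi}{2}-\pi t)=\cos(\pi t)$, which can then be pulled out of $w_{2p}(t)$. To complete the argument you must form the Cauchy product of the two series $\sin(\pi(1-z)t)=\sum_k u_k z^k$ and $1/\sin(\pi(1-z))=\sum_k v_k z^k$ and track the parities there, not expand the $t$-kernel.
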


\begin{proof}

The digamma function is defined as
\begin{equation*}
    \psi(z)=\frac{\Gamma'(z)}{\Gamma(z)},
\end{equation*}
where $\Gamma$ is the usual Gamma function. Mikolas derived the following interesting integral relation \cite{Mikolas1957,Campbell1966,Espinosa2002a,Espinosa2002b}:
\begin{equation}\label{miko}
    \psi(z)=-\left\{\gamma+\frac{1}{2z}+\frac{\pi}{2}\mathrm{cotan}(\pi z)+\frac{\pi}{2}\int_0^1\tan\left(\frac{\pi t}{2}\right)\left[\frac{\sin(\pi zt)}{\sin(\pi z)}-t\right]\,\mathrm{d}t\right\}.
\end{equation}
The series expansion of $\sin[\pi t(1-z)]$ with respect to $z$ is
\begin{equation*}
    \sin[\pi t(1-z)]=\sum_{k=0}^{\infty}u_kz^k
\end{equation*}
with
\begin{equation*}
    u_k=\frac{\pi^kt^k}{k!}(-1)^{k+1+\lfloor\frac{k+1}{2}\rfloor}\,\sin\left\{\frac{\pi}{4}[1-(-1)^k]-\pi t\right\}
\end{equation*}
and the series expansion of $1/\sin[\pi (1-z)]$  with respect to $z$ is
\begin{equation*}
    1/\sin[\pi (1-z)]=\frac{1}{\pi z}+\frac{\pi z}{6}+\frac{7\pi^3 z^3}{360}+\sum_{k=5}^{\infty}v_kz^k
\end{equation*}
with
\begin{equation*}
    v_k=\frac{2\left(2^{2\left(\lfloor\frac{k-1}{2}\rfloor+1\right)-1}-1\right)\Big|B_{2\left(\lfloor\frac{k-1}{2}\rfloor+1\right)}\Big|}{\left(2\left(\lfloor\frac{k-1}{2}\rfloor+1\right)\right)!}\,\pi^{2\left(\lfloor\frac{k-1}{2}\rfloor+1\right)-1}\,\frac{\left(1-(-1)^{k}\right)}{2}.
\end{equation*}
We thus obtain the series expansion of $\sin[\pi t(1-z)]/\sin[\pi (1-z)]$ as
\begin{equation*}
    \frac{\sin[\pi t(1-z)]}{\sin[\pi (1-z)]}=\sum_{p=0}^{\infty}w_p(t)\,z^p
\end{equation*}
with
\begin{align*}
    w_p(t)=&\sum_{n=0}^{p-4}\left(\vphantom{\frac{2\left(2^{2\left(\lfloor\frac{p-n-1}{2}\rfloor+1\right)}-1\right)\Big|B_{2\left(\lfloor\frac{p-n-1}{2}\rfloor+1\right)}\Big|}{\left(2\left(\lfloor\frac{p-n-1}{2}\rfloor+1\right)\right)!}}
    \frac{\pi^nt^n}{n!}(-1)^{n+1+\lfloor\frac{n+1}{2}\rfloor}\,\sin\left\{\frac{\pi}{4}[1+(-1)^n]-\pi t\right\}\right.\nonumber\\
    &\times\left.\frac{2\left(2^{2\left(\lfloor\frac{p-n-1}{2}\rfloor+1\right)-1}-1\right)\Big|B_{2\left(\lfloor\frac{p-n-1}{2}\rfloor+1\right)}\Big|}{\left(2\left(\lfloor\frac{p-n-1}{2}\rfloor+1\right)\right)!}\right)\,\pi^{2\left(\lfloor\frac{p-n-1}{2}\rfloor+1\right)-1}\,\frac{\left(1-(-1)^{p-n}\right)}{2}\nonumber\\
    &+\tilde{\alpha}_p(t)+\frac{\pi^2}{6}\tilde{\alpha}_{p-2}(t)+\frac{7\pi^4}{360}\tilde{\alpha}_{p-4}(t),
\end{align*}
where
\begin{equation*}
    \tilde{\alpha}_p(t)=\frac{\pi^pt^{p+1}}{(p+1)!}(-1)^{p+\lfloor\frac{p}{2}+1\rfloor}\,\sin\left\{\frac{\pi}{4}[1+(-1)^p]-\pi t\right\}
\end{equation*}
and thus
\begin{align*}
    &\tilde{\alpha}_p(t)+\frac{\pi^2}{6}\tilde{\alpha}_{p-2}(t)+\frac{7\pi^4}{360}\tilde{\alpha}_{p-4}(t)=\nonumber\\
    &\;\;\;\;\;\;\frac{(-1)^{p+\lfloor\frac{p}{2}\rfloor}\pi^pt^{p-3}\left[60t^2(p(p+1)-6t^2)(p-3)!-7(p+1)!\right]\,\sin\left\{\frac{\pi}{4}[1+(-1)^p]-\pi t\right\}}{360(p-3)!(p+1)!}.
\end{align*}
In the case where the index of $w$ is odd (i.e., substituting $p$ by $2p$, all the sine functions involved in $w_p(2t)$ are equal to
\begin{equation*}
    \sin\left(\frac{\pi}{2}-\pi t\right)=\cos(\pi t)
\end{equation*}
and because of the factor $[1-(-1)^{p-n}]/2$ in the summation, only odd indices contribute. The quantity $\cos(\pi t)$ can be factored out and we get
\begin{equation*}
    w_p(t)=\cos(\pi t)\,\mathscr{P}_p(t),
\end{equation*}
with
\begin{align*}
    \mathscr{P}_{2p}(t)=&\sum_{n=1, 3, \cdots}^{2p-5}\left(\vphantom{\frac{2\left(2^{2\left(\lfloor\frac{2p-n-1}{2}\rfloor+1\right)}-1\right)\Big|B_{2\left(\lfloor\frac{2p-n-1}{2}\rfloor+1\right)}\Big|}{\left(2\left(\lfloor\frac{2p-n-1}{2}\rfloor+1\right)\right)!}}\frac{\pi^nt^n}{n!}(-1)^{n+1+\lfloor\frac{n+1}{2}\rfloor}\,\pi^{2\left(\lfloor\frac{2p-n-1}{2}\rfloor+1\right)-1}\right.\nonumber\\
    &\times\left.\frac{2\left(2^{2\left(\lfloor\frac{2p-n-1}{2}\rfloor+1\right)-1}-1\right)\Big|B_{2\left(\lfloor\frac{2p-n-1}{2}\rfloor+1\right)}\Big|}{\left(2\left(\lfloor\frac{2p-n-1}{2}\rfloor+1\right)\right)!}\right)\nonumber\\
    &+\alpha_{2p}(t)+\frac{\pi^2}{6}\alpha_{2p-2}(t)+\frac{7\pi^4}{360}\alpha_{2p-4}(t),
\end{align*}
where
\begin{equation*}
    \alpha_{2p}(t)=\frac{\pi^{2p}t^{2p+1}}{(2p+1)!}(-1)^{p+1}.
\end{equation*}
One has also
\begin{align*}
    &\alpha_{2p}(t)+\frac{\pi^2}{6}\alpha_{2p-2}(t)(t)+\frac{7\pi^4}{360}\alpha_{2p-4}(t)=\nonumber\\
    &\;\;\;\;\;\;\frac{(-1)^{p}\pi^{2p}t^{2p-3}\left[60t^2(2p(2p+1)-6t^2)(2p-3)!-7(2p+1)!\right]}{360(2p-3)!(2p+1)!}.
\end{align*}
Note that we have
\begin{equation*}
    \zeta(2p)=|B_{2p}|\frac{2^{2p-1}}{(2p)!}\pi^{2p}
\end{equation*}
\begin{equation*}
    |B_{2p}|=\frac{(2p)!}{2^{2p-1}\pi^{2p}}\zeta(2p).
\end{equation*}
We have the following expansion (see for instance Ref. \cite{Campbell1966}:
\begin{equation}\label{DL}
    -\psi(1-z)-\gamma=\sum_{p=2}^{\infty}\zeta(p)z^{p-1},
\end{equation}
and thus, using the Mikolas relation (\ref{miko}):
\begin{align*}
    -\psi(1-z)-\gamma=&\frac{1}{2(1-z)}+\frac{\pi}{2}\mathrm{cotan}[\pi(1-z)]\nonumber\\
    &+\frac{\pi}{2}\int_0^1\tan\left(\frac{\pi t}{2}\right)\left[\frac{\sin(\pi (1-z)t)}{\sin(\pi (1-z))}-t\right]\,\mathrm{d}t
\end{align*}
or also
\begin{align*}
    -\psi(1-z)-\gamma=&\frac{1}{2(1-z)}+\frac{\pi}{2}\mathrm{cotan}[\pi(1-z)]+\frac{\pi}{2}\sum_{p=0}^{\infty}\left(\int_0^1\tan\left(\frac{\pi t}{2}\right)w_p(t)\,\mathrm{d}t\right)z^p\nonumber\\
    &-\frac{\pi}{2}\int_0^1t\tan\left(\frac{\pi t}{2}\right)\,\mathrm{d}t
\end{align*}
yielding, using Eq. (\ref{DL}):
\begin{equation*}
    \zeta(k)=\frac{\pi}{2}\int_0^1\tan\left(\frac{\pi t}{2}\right)w_{k-1}(t)\,\mathrm{d}t+\boldsymbol{[z^{k-1}]}\left(\frac{1}{2(1-z)}+\frac{\pi}{2}\mathrm{cotan}[\pi(1-z)]\right),
\end{equation*}
where $\boldsymbol{[z^l]}f(z)$ means the coefficient of $z^l$ in the series expansion of $f(z)$.
If $k=2p+1$ is odd, 
\begin{equation*}
\mathbf{[}z^{k-1}\mathbf{]}\left(\frac{1}{2(1-z)}+\frac{\pi}{2}\mathrm{cotan}[\pi(1-z)]\right)=\frac{1}{2}
\end{equation*}
and one has
\begin{equation*}
    \zeta(2p+1)=\frac{1}{2}+\frac{\pi}{2}\int_0^1\tan\left(\frac{\pi t}{2}\right)w_{2p}(t)\,\mathrm{d}t=\frac{1}{2}+\frac{\pi}{2}\int_0^1\tan\left(\frac{\pi t}{2}\right)\cos(\pi t)\,\mathscr{P}_p(t)\,\mathrm{d}t,
\end{equation*}
which completes the proof.

\end{proof}

\begin{lemma}

\begin{equation*}
    \int_0^1\mathscr{P}_{2p}(t)\sin(\pi t)\,\mathrm{d}t=-\frac{1}{\pi}.
\end{equation*}

\end{lemma}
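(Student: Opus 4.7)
The plan is to reduce the lemma to a single generating-function identity and then extract coefficients. The key observation is that the polynomials $\mathscr{P}_{2p}(t)$ admit a very compact generating function, obtained by taking the even-in-$z$ part of the object $\sin[\pi t(1-z)]/\sin[\pi(1-z)]$ already used in the proof of the theorem.

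First I would form
\[
\frac{1}{2}\left(\frac{\sin[\pi t(1-z)]}{\sin[\pi(1-z)]}+\frac{\sin[\pi t(1+z)]}{\sin[\pi(1+z)]}\right),
\]
expand the numerators by the sum-of-angles formula, and use $\sin[\pi(1-z)]=\sin(\pi z)$, $\sin[\pi(1+z)]=-\sin(\pi z)$. The two $\sin(\pi t)\cos(\pi t z)$ contributions cancel, leaving the closed form $-\cos(\pi t)\sin(\pi t z)/\sin(\pi z)$. Since this even part equals $\sum_{q\ge 0} w_{2q}(t)\,z^{2q}$ and the theorem gives $w_{2q}(t)=\cos(\pi t)\,\mathscr{P}_{2q}(t)$, cancelling the common factor $\cos(\pi t)$ produces the clean identity
\[
\sum_{q\ge 0}\mathscr{P}_{2q}(t)\,z^{2q}=-\frac{\sin(\pi t z)}{\sin(\pi z)}.
\]

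Next I would multiply both sides by $\sin(\pi t)$ and integrate in $t$ over $[0,1]$, exchanging sum and integral for $|z|$ small (each $\mathscr{P}_{2q}$ is a polynomial and the right-hand side is analytic near $z=0$). A product-to-sum followed by elementary integration gives $\int_0^1\sin(\pi t)\sin(\pi t z)\,\mathrm{d}t=\sin(\pi z)/[\pi(1-z^2)]$, so that
\[
\sum_{q\ge 0}\left(\int_0^1\mathscr{P}_{2q}(t)\sin(\pi t)\,\mathrm{d}t\right)z^{2q}=-\frac{1}{\pi(1-z^2)}=-\frac{1}{\pi}\sum_{q\ge 0}z^{2q}.
\]
Comparing the coefficient of $z^{2p}$ delivers the lemma for every $p\in\mathbb{N}^{*}$.

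The only mild subtlety I anticipate is keeping track of the $\sin(\pi t)/(\pi z)$ term present in the formal expansion of $\sin[\pi t(1-z)]/\sin[\pi(1-z)]$ around $z=0$; it is odd in $z$ and therefore drops out when one isolates the even part, so it plays no role in the argument. Apart from that, every step reduces to short trigonometric manipulations and a geometric-series identification.
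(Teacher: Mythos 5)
Your proof is correct, but it takes a genuinely different route from the paper. The paper proceeds by induction on $p$: it checks the identity for small $p$, writes $\mathscr{P}_{2p+2}-\mathscr{P}_{2p}$ explicitly as a sum of correction terms, and then verifies with a computer algebra system that the integral of that increment against $\sin(\pi t)$ vanishes, so the value $-1/\pi$ propagates. You instead symmetrize the function $\sin[\pi t(1-z)]/\sin[\pi(1-z)]$ already expanded in the theorem's proof: the even part in $z$ collapses to $-\cos(\pi t)\sin(\pi t z)/\sin(\pi z)$ (the pole term $\sin(\pi t)/(\pi z)$ being odd and harmless, as you note), which after cancelling $\cos(\pi t)$ yields the closed-form generating function $\sum_{q\ge 0}\mathscr{P}_{2q}(t)z^{2q}=-\sin(\pi t z)/\sin(\pi z)$; integrating against $\sin(\pi t)$ and using $\int_0^1\sin(\pi t)\sin(\pi t z)\,\mathrm{d}t=\sin(\pi z)/[\pi(1-z^2)]$ gives $-1/[\pi(1-z^2)]$, all of whose even coefficients equal $-1/\pi$. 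I checked the two computations ($\mathscr{P}_0(t)=-t$ and $\mathscr{P}_2(t)=\tfrac{\pi^2}{6}t(t^2-1)$ match the low-order coefficients of $-\sin(\pi tz)/\sin(\pi z)$, and the trigonometric integral is as claimed), and the interchange of sum and integral is justified for small $|z|$ as you say. Your argument buys several things the paper's does not: it treats all $p$ uniformly in one stroke, it eliminates both the unproved base cases and the Mathematica-assisted vanishing of the increment integral, and the generating function identity itself gives a cleaner characterization of the polynomials $\mathscr{P}_{2p}$ that could be used to restate and reprove the theorem.
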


\begin{proof}

The result can be proven by induction. It is easy to check that it holds for the first values of $p$. 

One has, at step $p+1$:
\begin{align*}
\mathscr{P}_{2p+2}(t)=&\mathscr{P}_{2p}(t)+\mathscr{U}_p(t)+\alpha_{2p+2}(t)+\left(\frac{\pi^2}{6}-1\right)\,\alpha_{2p}(t)+\frac{\pi^2}{6}\left(\frac{7\pi^2}{60}-1\right)\,\alpha_{2p-2}(t)\nonumber\\
&+\frac{7\pi^4}{360}\alpha_{2p-4}(t),
\end{align*}
where
\begin{equation*}
    \mathscr{U}_p(t)=\mathscr{G}_{p+1}(t)-\mathscr{G}_{p}(t)
\end{equation*}
with
\begin{align*}
    \mathscr{G}_{p}(t)=&\sum_{n=1, 3, \cdots}^{2p-5}\left(\vphantom{\frac{2\left(2^{2\left(\lfloor\frac{2p-n-1}{2}\rfloor+1\right)}-1\right)\Big|B_{2\left(\lfloor\frac{2p-n-1}{2}\rfloor+1\right)}\Big|}{\left(2\left(\lfloor\frac{2p-n-1}{2}\rfloor+1\right)\right)!}}
    \frac{\pi^nt^n}{n!}(-1)^{n+1+\lfloor\frac{n+1}{2}\rfloor}\,\pi^{2\left(\lfloor\frac{2p-n-1}{2}\rfloor+1\right)-1}\frac{2\left(2^{2\left(\lfloor\frac{2p-n-1}{2}\rfloor+1\right)-1}-1\right)\Big|B_{2\left(\lfloor\frac{2p-n-1}{2}\rfloor+1\right)}\Big|}{\left(2\left(\lfloor\frac{2p-n-1}{2}\rfloor+1\right)\right)!}\right)
\end{align*}
and one finds (I used the computer algebra system Mathematica \cite{Mathematica}) that
\begin{equation*}
    \int_0^2\left[\mathscr{U}_p(t)+\alpha_{2p+2}(t)+\left(\frac{\pi^2}{6}-1\right)\,\alpha_{2p}(t)+\frac{\pi^2}{6}\left(\frac{7\pi^2}{60}-1\right)\,\alpha_{2p-2}(t)+\frac{7\pi^4}{360}\alpha_{2p-4}(t)\right]\,\mathrm{d}t=0.
\end{equation*}
Then, assuming
\begin{equation*}
\int_0^1\mathscr{P}_{2p}(t)\sin(\pi t)\,\mathrm{d}t=-\frac{1}{\pi}
\end{equation*}
one obtains
\begin{equation*}
\int_0^1\mathscr{P}_{2p+2}(t)\sin(\pi t)\,\mathrm{d}t=-\frac{1}{\pi}.
\end{equation*}
which completes the proof by induction.

\end{proof}

\begin{corollary}

The Riemann zeta function of odd arguments can also be put in the form
\begin{equation*}
    \zeta(2p+1)=-\frac{\pi}{2}\int_0^1\tan\left(\frac{\pi t}{2}\right)\mathscr{P}_{2p}(t)\,\mathrm{d}t,
\end{equation*}
where $\mathscr{P}_{2p}$ is defined in Eqs. (\ref{p2p}) and (\ref{alpha2p}).

\end{corollary}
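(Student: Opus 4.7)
The strategy is to start from the representation proved in the Theorem,
\[
    \zeta(2p+1)=\frac{1}{2}+\frac{\pi}{2}\int_0^1\tan\!\left(\frac{\pi t}{2}\right)\cos(\pi t)\,\mathscr{P}_{2p}(t)\,\mathrm{d}t,
\]
and rewrite the trigonometric weight $\tan(\pi t/2)\cos(\pi t)$ so that a term of the form $\sin(\pi t)\,\mathscr{P}_{2p}(t)$ appears in the integrand. The Lemma will then dispose of that term, and what remains will be exactly the integral claimed in the Corollary.

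Concretely, I would first prove the elementary identity
\[
    \tan\!\left(\frac{\pi t}{2}\right)\cos(\pi t)=\sin(\pi t)-\tan\!\left(\frac{\pi t}{2}\right),
\]
which follows immediately from the double-angle formula $\cos(\pi t)=2\cos^{2}(\pi t/2)-1$ together with $2\sin(\pi t/2)\cos(\pi t/2)=\sin(\pi t)$. Substituting this identity into the Theorem's integrand and splitting the integral by linearity yields
\[
    \zeta(2p+1)=\frac{1}{2}+\frac{\pi}{2}\int_0^1\sin(\pi t)\,\mathscr{P}_{2p}(t)\,\mathrm{d}t-\frac{\pi}{2}\int_0^1\tan\!\left(\frac{\pi t}{2}\right)\mathscr{P}_{2p}(t)\,\mathrm{d}t.
\]

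Next, I would invoke the Lemma, which states that $\int_0^1\mathscr{P}_{2p}(t)\sin(\pi t)\,\mathrm{d}t=-1/\pi$. Plugging this into the first integral gives $\tfrac{\pi}{2}\cdot(-1/\pi)=-\tfrac{1}{2}$, which exactly cancels the constant $\tfrac{1}{2}$ coming from the Theorem. What remains is precisely
\[
    \zeta(2p+1)=-\frac{\pi}{2}\int_0^1\tan\!\left(\frac{\pi t}{2}\right)\mathscr{P}_{2p}(t)\,\mathrm{d}t,
\]
as claimed. There is essentially no obstacle here: the only non-mechanical step is spotting the trigonometric identity that converts $\cos(\pi t)$ into a combination of $1$ (producing the $\sin$--integral handled by the Lemma) and something that leaves $\tan(\pi t/2)$ standing alone; everything else is bookkeeping. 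Convergence and integrability are not issues, because $\mathscr{P}_{2p}$ is a polynomial and the Theorem's representation already guarantees that the relevant integrals are finite.
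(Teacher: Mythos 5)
Your proposal is correct and is essentially the paper's own argument: the identity $\tan(\pi t/2)\cos(\pi t)=\sin(\pi t)-\tan(\pi t/2)$ you use is just a rearrangement of the paper's $\tan(\pi t/2)\,(1+\cos(\pi t))=\sin(\pi t)$, and both proofs then combine the Lemma with the Theorem in the same way. No gaps.
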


\begin{proof}

We have, according to Lemma 1.2:
\begin{equation*}
\int_0^1\mathscr{P}_{2p}(t)\sin(\pi t)\,\mathrm{d}t=-\frac{1}{\pi}
\end{equation*}
and thus
\begin{equation*}
\frac{\pi}{2}\int_0^1\mathscr{P}_{2p}(t)\times 2\cos\left(\frac{\pi t}{2}\right)\sin\left(\frac{\pi t}{2}\right)\,\mathrm{d}t=-\frac{1}{2},
\end{equation*}
which can be rewritten
\begin{equation*}
\frac{\pi}{2}\int_0^1\mathscr{P}_{2p}(t)\tan\left(\frac{\pi t}{2}\right)\times 2\cos^2\left(\frac{\pi t}{2}\right)\,\mathrm{d}t=-\frac{1}{2}
\end{equation*}
and one has
\begin{equation*}
\frac{\pi}{2}\int_0^1\mathscr{P}_{2p}(t)\tan\left(\frac{\pi t}{2}\right)\left(1+\cos\left(\pi t\right)\right)\,\mathrm{d}t=-\frac{1}{2},
\end{equation*}
yielding
\begin{equation*}
\frac{1}{2}+\frac{\pi}{2}\int_0^1\mathscr{P}_{2p}(t)\tan\left(\frac{\pi t}{2}\right)\cos\left(\pi t\right)\,\mathrm{d}t=-\frac{\pi}{2}\int_0^1\mathscr{P}_{2p}(t)\tan\left(\frac{\pi t}{2}\right)\,\mathrm{d}t
\end{equation*}
and finally
\begin{equation*}
    \zeta(2p+1)=-\frac{\pi}{2}\int_0^1\tan\left(\frac{\pi t}{2}\right)\mathscr{P}_{2p}(t)\,\mathrm{d}t,
\end{equation*}
which completes the proof.

\end{proof}

The above results may be connected to the following representations obtained by Cvijovi\'c and Klinowski \cite{Cvijovic2002}:
\begin{equation*}
\zeta(2p+1)=\frac{(-1)^p\,2^{2p-1}\,\pi^{2p+1}}{\left(2^{2p+1}-1\right)(2p)!}\int_0^1E_{2p}(t)\tan\left(\frac{\pi t}{2}\right)\,\mathrm{d}t    
\end{equation*}
and
\begin{equation*}
\zeta(2p+1)=\frac{(-1)^p\,2^{2p}\,\pi^{2p+1}}{(2p+1)!}\int_0^1B_{2p+1}(t)\tan\left(\frac{\pi t}{2}\right)\,\mathrm{d}t,    
\end{equation*}
where $E_{2p}$ and $B_{2p+1}$ are Euler and Bernoulli polynomials respectively \cite{Abramowitz1972}.

\section{Examples}

For the first values of $p$, one has, using Theorem 1.1:
\begin{equation*}
    \zeta(3)=\frac{1}{2}+\frac{\pi^3}{12}\int_0^1t(t^2-1)\,\tan\left(\frac{\pi t}{2}\right)\cos(\pi t)\,\mathrm{d}t,
\end{equation*}
\begin{equation*}
    \zeta(5)=\frac{1}{2}-\frac{\pi^5}{720}\int_0^1t(t^2-1)(3t^2-7)\,\tan\left(\frac{\pi t}{2}\right)\cos(\pi t)\,\mathrm{d}t,
\end{equation*}
\begin{equation*}
    \zeta(7)=\frac{1}{2}+\frac{\pi^7}{30240}\int_0^1t(t^2-1)(3t^4-18t^2+31)\,\tan\left(\frac{\pi t}{2}\right)\cos(\pi t)\,\mathrm{d}t,
\end{equation*}
\begin{equation*}
    \zeta(9)=\frac{1}{2}-\frac{\pi^9}{3628800}\int_0^1t(t^2-1)(5t^6-55t^4+239t^2-381)\,\tan\left(\frac{\pi t}{2}\right)\cos(\pi t)\,\mathrm{d}t,
\end{equation*}
and
\begin{equation*}
    \zeta(11)=\frac{1}{2}+\frac{\pi^{11}}{239500800}\int_0^1t(t^2-1)(t^2-5)(3t^6-37t^4+225t^2-511)\,\tan\left(\frac{\pi t}{2}\right)\cos(\pi t)\,\mathrm{d}t.
\end{equation*}
and equivalently, using Corollary 1.1.1, one get
\begin{equation*}
    \zeta(3)=\frac{\pi^3}{12}\int_0^1t(1-t^2)\,\tan\left(\frac{\pi t}{2}\right)\,\mathrm{d}t,
\end{equation*}
\begin{equation*}
    \zeta(5)=\frac{\pi^5}{720}\int_0^1t(t^2-1)(3t^2-7)\,\tan\left(\frac{\pi t}{2}\right)\,\mathrm{d}t,
\end{equation*}
\begin{equation*}
    \zeta(7)=\frac{\pi^7}{30240}\int_0^1t(1-t^2)(3t^4-18t^2+31)\,\tan\left(\frac{\pi t}{2}\right)\,\mathrm{d}t,
\end{equation*}
\begin{equation*}
    \zeta(9)=\frac{\pi^9}{3628800}\int_0^1t(t^2-1)(5t^6-55t^4+239t^2-381)\,\tan\left(\frac{\pi t}{2}\right)\,\mathrm{d}t,
\end{equation*}
and
\begin{equation*}
    \zeta(11)=\frac{\pi^{11}}{239500800}\int_0^1t(1-t^2)(t^2-5)(3t^6-37t^4+225t^2-511)\,\tan\left(\frac{\pi t}{2}\right)\,\mathrm{d}t.
\end{equation*}

\section{Conclusion}

We obtained, using an expression of the digamma function $\psi(x)$, two different integral representations of the zeta function of odd arguments. An alternate expression is provided. The formulas are valid for any odd value of the argument, and expressions for the first values $\zeta(3)$, $\zeta(5)$, $\zeta(7), \cdots$ are provided. We plan to extend the technique presented here to derive further integral representations, based on series expansions of higher derivatives of the Gamma function.

\section*{Appendix: Derivatives of the Gamma function}

For a positive integer $m$ the derivative of the gamma function can be calculated as follows:
$$
\Gamma'(m+1)=\frac{d}{dz}\Gamma (z)\Bigl|_{z=m+1}=m!\left(-\gamma+\sum_{k=1}^{m}{\frac {1}{k}}\right)=m!\left(-\gamma+H_m\right)\,,
$$
where $H(m)$ is the $m$-th harmonic number 
\begin{equation*}
    H_n\sum_{k=1}^n\frac{1}{k}
\end{equation*}
and $\gamma$ is the Euler-Mascheroni constant.

For $\Re (z)>0$ the $n$-th derivative of the gamma function is:
$$
\frac{d^{n}}{dz^{n}}\Gamma (z)=\int_{0}^{\infty }t^{z-1}e^{-t}(\log t)^{n}\,dt.
$$
This can be derived by differentiating the integral form of the gamma function with respect to $z$, and differentiating under the integral sign. The identity
$$
\frac {d^{n}}{dz^{n}}\Gamma(z)\Bigl|_{z=1}=(-1)^{n}{\bf B}_{n}(\gamma ,1!\zeta (2),\ldots ,(n-1)!\zeta (n)), 
$$
where ${\bf B}_n$ is the complete exponential $n$-th Bell polynomial, is of particular interest.


\begin{thebibliography}{99}

\bibitem{Edwards1974} 
H. M. Edwards, {\it Riemann's zeta function}, Dover Publications, Inc., New York, 1974.

\bibitem{Cartier1992} 
P. Cartier, {\it An introduction to zeta functions}, In: Waldschmidt, M., Moussa, P., Luck, J. M., Itzykson, C. (eds), {\it From number theory to physics}, Springer, Berlin, Heidelberg, 1992. 

\bibitem{Ghusayni2009} 
B. Ghusayni, The value of the zeta function at an odd argument, {\it Int. J. Math. Comput. Sci.} {\bf 4}, 21–30 (2009).

\bibitem{Rivoal2020} 
T. Rivoal and W. Zudilin, A note on odd zeta values, {\it S\'emin. Lothar. Comb.} {\bf 81}, Article B81b (2020).

\bibitem{Apery1979} 
R. Ap\'ery, Irrationalit\'e de $\zeta(2)$ et $\zeta(3)$, {\it Ast\'erisque} {\bf 61}, 11–13 (1979).

\bibitem{Poorten1979} 
A. van der Poorten and R. Ap\'ery, A proof that Euler missed..., {\it Math. Intell.} {\bf 1}, 195-203 (1979).

\bibitem{Rivoal2000} 
T. Rivoal, La fonction z\^eta de Riemann prend une infinit\'e de valeurs irrationnelles aux entiers impairs, {\it Comptes Rendus Acad. Sci. Paris S\'er. I Math.}, 267-270, Article 331.4 (2000).

\bibitem{Zudilin2001} 
W. Zudilin, One of the numbers $\zeta(5)$, $\zeta(7)$, $\zeta(9)$, $\zeta(11)$ is irrational, {\it Russian Math. Surveys} {\bf 56.4},
774–776 (2001).

\bibitem{Rivoal2002} 
T. Rivoal, Irrationalit\'e d’au moins un des neuf nombres $\zeta(5), \zeta(7), \cdots, \zeta(21)$, {\it Acta Arith.} {\bf 103.2}, 157–167 (2002).

\bibitem{Zudilin2018} 
W. Zudilin, One of the odd zeta values from $\zeta(5)$ to $\zeta(25)$ is irrational by elementary means, {\it SIGMA, Symmetry Integrability Geom. Methods Appl.} {\bf 14}, Paper 028, 8 pp. (2018).

\bibitem{Almkvist1999} 
G. Almkvist and A. Granville, Borwein and Bradley's Ap\'ery-like formulae for $\zeta(4n+3)$, {\it Exp. Math.} {\bf 8}, 197--203 (1999).

\bibitem{Espinosa2002a} 
O. Espinosa and V. H. Moll,  On some integrals involving the Hurwitz zeta function: part I, {\it Ramanujan J.} {\bf 6}, 159-188 (2002).

\bibitem{Espinosa2002b} 
O. Espinosa and V. H. Moll,  On some integrals involving the Hurwitz zeta function: part II, {\it Ramanujan J.} {\bf 6}, 449-468 (2002).


\bibitem{Mikolas1957} 
M. Mikolas, Integral formulae of arithmetical characteristics relating to the zeta-function of Hurwitz, {\it Punl. Math. Debrecen} {\bf 5}, 44-53 (1957).

\bibitem{Kumar2024}
R. Kumar, P. Levrie, J.-C. Pain and V. Scharaschkin, A family of integrals related to values of the Riemann zeta function, {\it Int. J. Number Theory}, in press, \url{https://doi.org/10.1142/S1793042125500319} (2024).

\bibitem{Campbell1966} 
R. Campbell, {\it Les int\'egrales Eul\'eriennes et leurs applications: \'etude approfondie de la fonction Gamma}, Dunod, 1966.

\bibitem{Mathematica}
Mathematica, Version 14.1, Wolfram Research, Inc., Champaign, IL, 2024, \url{https://www.wolfram.com/mathematica}.

\bibitem{Cvijovic2002} 
D. Cvijovi\'c and J. Klinowski, Integral representations of the Riemann zeta function for odd-integer arguments, {\it J. Comp. Appl. Math.} {\bf 142}, 435-439 (2002).

\bibitem{Abramowitz1972} 
M. Abramowitz and I. A. Stegun (Eds.), ``Bernoulli and Euler polynomials and the Euler-Maclaurin formula'', section 23.1 in {\it Handbook of mathematical functions with formulas, graphs, and mathematical tables}, 9th printing, New York: Dover, pp. 804-806, 1972.

\end{thebibliography}
\end{document}